\theoremstyle{lema}
\theoremstyle{proposition}
\theoremstyle{theorem}
\newtheorem{theorem}{Theorem}[section]
\theoremstyle{theorem}
\newtheorem{remark}{Remark}[section]
\theoremstyle{corollary}
\theoremstyle{theorem}
\newtheorem{definition}{Definition}[section]
\theoremstyle{claim}
\newtheorem{claim}{Claim}
\def\r{\mathbb R}
\def\n{\mathbb N}
\begin{document}
	
\title{A countable fractal interpolation scheme involving Rakotch contractions}

%Numele si afilierea

\author{Cristina Maria PACURAR\\\small{Faculty of Mathematics and Computer Science,}\\ \small{Transilvania University of Bra\c sov, Bulevardul Eroilor 29, Bra\c sov}\\
\small{email:{cristina.pacurar@unitbv.ro}  } }

\date{}	

\maketitle

	\begin{abstract}
		The main result of this paper states that for a given countable system of data $\Delta$, there exists a countable iterated function system consisting of Rakotch contractions, such that its attractor is the graph of a fractal interpolation function corresponding to $\Delta$. In this way, on the one hand, we generalize a result due to N. Secelean (see \textit{The fractal interpolation for countable systems of data}, Univ. Beograd. Publ. Elektrotehn. Fak. Ser. Mat., \textbf{14} (2003), 11–19) by considering countable systems consisting of Rakotch contractions rather than Banach contractions. On the other hand, we generalize a result due to S. Ri (see \textit{A new idea to construct the fractal interpolation function}, Indag. Math., \textbf{29} (2018), 962-971) by considering countable (rather than finite) systems consisting of Rakotch contractions. Some exemplifications are provided.
	\end{abstract}

\textit{Key words:} fractal interpolation function, countable iterated function system, Rakotch contractions, Matkowski contractions

\textit{AMS 2010 Subject Classification}: 28A80, 41A05, 58F12

\section{Introduction}

Fractal interpolation is a special method for constructing a continuous function which passes through all of the points of a given system of points. For a given set of data $\{(x_i,y_i) \in I \times \r, i = \{0,\dots ,N-1\}\}$, where $I=[x_0,x_N]$ is a closed real interval and $x_i < x_{i+1}$ for all $i = \{0,\dots,N\}$, the fractal interpolation function (FIF) is a continuous function $f:I\to \r$ which interpolates the given data such that its graph is the attractor of an iterated function system, a notion due to Hutchinson (see \cite{Hutchinson}). Fractal interpolation functions were introduced by Barnsley (see \cite{Barnsley}, \cite{Barnsley-book}) and have been intensively studied ever since. 

The main difference between fractal interpolation and other types of interpolation techniques is that the interpolation function obtained is not necessarily differentiable at any point, thus, being closer to natural world phenomena and providing a more powerful tool in fitting real-world data. A comprehensive survey on FIFs is that of Navascu\'es et al. (see \cite{survey}).

In the development of the theory of FIFs, there have been many generalizations of Barnsley's result. Among these directions of research, we mention the hidden variable FIFs, introduced by Barnsley for systems of data which are not self-referential (see \cite{Barnsley}, \cite{Barnsley-hidden}, \cite{Chand}, \cite{Dalla}) and the extension to higher dimensional cases of FIFs (see \cite{Massopust}, \cite{Dalla-2}, \cite{Dalla-3}, \cite{Xie}, \cite{Zhao}, \cite{Ri-2}, \cite{Ruan}, \cite{Verma}).

Another direction of interest regarding FIFs is related to the fixed point result which guarantees the existence of the FIF. While most of the extensions rely on the Banach fixed point theorem (following Barnsley's results) in order to prove the existence of a FIF, there have been recent results which use different fixed point results. In this respect, Ri has used Rakotch contractions to obtain new results (see \cite{Ri}), Kim et al. resorted to Geraghty contractions (see \cite{Kim}) and Ri and Drakopoulos extended the results to surfaces (see \cite{Ri-3}).

A different direction related to FIFs is to extend the finite set of points which are interpolated to a countable set. Thus, countable fractal interpolation has been introduced by Secelean (see \cite{Secelean-1}) based on countable iterated function systems (see \cite{Fernau}, \cite{Secelean-cifs}, \cite{Secelean-CIFS}). In \cite{Secelean-1}, Secelean proved the existence of the FIF for a countable iterated function system for a set of data $\Delta=\{(x_n,y_n) \in I \times \r, n \geq 0\}$ where $(x_n)_{n \geq 0}$ is a strictly increasing bounded sequence and $(y_n)_{n\geq 0}$ is a convergent sequence. These results were extended by Gowrisankar and Uthayakumar (see \cite{Gow}) for systems of data where $(x_n)_{n\geq 0}$ is a monotone bounded sequence and $(y_n)_{n \geq 0}$ is a bounded sequence. This direction has been further developed in several papers (see \cite{Secelean-rep}, \cite{Secelean-3}, \cite{Vis}).

By combining these two lines of research initiated by Secelean and Ri, in this paper we present a new fractal interpolation scheme for countable systems of data and countable iterated function systems composed of Rakotch contractions. Thus, the present paper extends the results from \cite{Secelean-1} and \cite{Ri}. Although the techniques that we used in our proofs are similar to those from \cite{Secelean-1} and \cite{Ri}, the countable iterated function systems for Rakotch contractions requires highly more effort and subtleties.

\section{Preliminaries}

\subsection{Notations and terminology}
	
Let $(X,d)$ be a compact metric space.

We denote by $\mathcal{P}_{cp}(X)$ the set of all non-empty compact subsets of $X$. % and by $C(X)$ the set of all continuous maps $f : X \to Y$.

We consider the Hausdorff metric $h : \mathcal{P}_{cp}(X) \times \mathcal{P}_{cp}(X) \to [0, \infty)$, defined as \begin{equation*}
	h(A,B) = \max \{\underset{x \in A}{\sup}\; \underset{y \in B}{\inf} d(x,y), \underset{x \in B}{\sup}\; \underset{y \in A}{\inf} d(x,y)\}
\end{equation*}
for $A, \, B \in \mathcal{P}_{cp} (X)$.

For $A \subset X$, by $diam(A)$ we denote the diameter of $A$.

\begin{definition}
	Given a metric space $(X,d)$, an operator $f:X\to X$ is called a Picard operator if $f$ has a unique fixed point $x_* \in X$ and $$\lim\limits_{n\to \infty}f^{[n]}(x) = x_*,$$ for every $x \in X$, where by $f^{[n]}$ we mean the composition of $f$ with itself n-times.
\end{definition}

\subsection{Rakotch and Matkowski contractions}

\begin{definition}[see Matkowski \cite{Matkowski}, Rakotch \cite{Rakotch}, Jachymski \cite{Jachymski}, Rhoades \cite{Rhoades}]
	
	~\begin{itemize}
		\item[i)] Let $\varphi : [0,\infty) \to [0,\infty)$ and $(X,d)$ a metric space. A map $f: X \to X$ is called a $\varphi$-contraction if $$d(f(x),f(y)) \leq \varphi(d(x,y)),$$ for all $x,\, y \in X$.
		
		\item[ii)] Given a metric space $(X,d)$, a map $f: X \to X$ is called Matkowski contraction if it is a $\varphi$-contraction, where $\varphi:[0,\infty) \to [0,\infty)$ is non-decreasing and $\lim\limits_{n\to \infty} \varphi^{[n]}(t) = 0$ for all $t > 0$.
		
		\item[iii)] Given a metric space $(X,d)$, a map $f: X \to X$ is called Rakotch contraction  if it is a $\varphi$-contraction, where $\varphi:[0,\infty) \to [0,\infty)$ is such that the function $\alpha : (0, \infty) \to (0,\infty)$, given by $\alpha(t) = \frac{\varphi(t)}{t}$ for every $t > 0$ is non-increasing and $\alpha(t) < 1$ for every $t \in (0,\infty)$.
	\end{itemize}
\end{definition}

\begin{remark} [see Remark 2.2 from \cite{Ri}, \cite{Jachymski-2} and \cite{Chand}]
	Given a metric space $(X,d)$, a map $f: X \to X$ is a Rakotch contraction if and only if it is a $\varphi$-contraction for some non-decreasing $\varphi : [0,\infty) \to [0,\infty)$ such that the function $\alpha : (0, \infty) \to (0,\infty)$, given by $\alpha(t) = \frac{\varphi(t)}{t}$ for every $t \in (0,\infty)$ is non-increasing and $\alpha(t) < 1$ for every $t \in (0,\infty)$.
\end{remark}

\begin{remark}
	\begin{itemize}
	\item[i)] Each Banach contraction is a Rakotch contraction  (for a function $\varphi$ given by $\varphi(t)=\alpha t$ for every $t >0$, where $\varphi \in [0,1)$).
	\item[ii)] Each Rakotch contraction is a Matkowski contraction.
	\end{itemize}
	\label{remark1}
\end{remark}

\begin{theorem} [see Matkowski \cite{Matkowski}]
	Given a complete metric space $(X,d)$, if $f : X \to X$ is a Matkowski contraction, then $f$ has a unique fixed point $x_* \in X$ and $\lim\limits_{n\to \infty}  f^{[n]}(x) = x_*$ for each $x \in X$.
	\label{MatkowskiTh}
\end{theorem}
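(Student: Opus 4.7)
The plan is to carry out the classical Matkowski argument in three logical steps: a preliminary bound on $\varphi$, uniqueness, and convergence of the Picard iteration. The main subtlety lies in showing the Picard iterates are Cauchy, since we lack a uniform Lipschitz constant.

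First I would verify that $\varphi(t)<t$ for every $t>0$: if $\varphi(t_0)\ge t_0$ for some $t_0>0$, monotonicity of $\varphi$ propagates this to $\varphi^{[n]}(t_0)\ge t_0$ for all $n$, contradicting $\varphi^{[n]}(t_0)\to 0$. Uniqueness is then immediate, since two distinct fixed points $x\ne y$ would yield $d(x,y)\le\varphi(d(x,y))<d(x,y)$.

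For existence, fix $x\in X$ and set $x_n:=f^{[n]}(x)$, $a_n:=d(x_n,x_{n+1})$. Monotonicity of $\varphi$ and the contraction inequality give inductively $a_n\le\varphi^{[n]}(a_0)\to 0$. To show $(x_n)$ is Cauchy I would argue by contradiction: if not, there exist $\epsilon>0$ and arbitrarily large $n$ admitting some $m>n$ with $d(x_n,x_m)\ge 2\epsilon$. Choose $J\in\mathbb{N}$ with $\varphi^{[J]}(5\epsilon/2)<\epsilon/2$ (using $\varphi^{[k]}(5\epsilon/2)\to 0$) and $N$ with $a_n<\epsilon/(4J)$ for $n\ge N$; then pick $n\ge N$ admitting such an $m$ and take $m$ minimal. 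Minimality forces $d(x_n,x_{m-1})<2\epsilon$, hence $d(x_n,x_m)<2\epsilon+a_{m-1}<5\epsilon/2$. Iterating the contraction $J$ times together with monotonicity of $\varphi$ gives $d(x_{n+J},x_{m+J})\le\varphi^{[J]}(d(x_n,x_m))\le\varphi^{[J]}(5\epsilon/2)<\epsilon/2$, while the triangle inequality bounds each of $d(x_n,x_{n+J})$ and $d(x_m,x_{m+J})$ by $J\cdot\epsilon/(4J)=\epsilon/4$. Triangulating once more, $d(x_n,x_m)\le\epsilon/4+\epsilon/2+\epsilon/4=\epsilon<2\epsilon$, the desired contradiction. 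By completeness $x_n\to x_*\in X$, and the nonexpansive property $d(f(u),f(v))\le\varphi(d(u,v))\le d(u,v)$ makes $f$ continuous, so $f(x_*)=\lim f(x_n)=\lim x_{n+1}=x_*$; by uniqueness, $f$ is a Picard operator.

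The main obstacle is the Cauchy step: unlike the Banach setting, $d(x_n,x_m)$ cannot be telescoped into a convergent geometric series, and $\varphi$ is only assumed non-decreasing. The trick is to exploit $\varphi^{[J]}(5\epsilon/2)<\epsilon/2$ directly: the minimal-$m$ device pins the initial distance below $5\epsilon/2$, $J$ iterations of the contraction shrink the shifted-tail distance below $\epsilon/2$, and taking $n$ sufficiently large controls the $J$-step overhead so the triangle inequality closes the loop.
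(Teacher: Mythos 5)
Your argument is correct and complete, but note that the paper itself offers no proof of this statement: it is quoted as a known result with a citation to Matkowski, so there is no internal proof to compare against. Checking your proposal on its own merits: the preliminary step $\varphi(t)<t$ for $t>0$ (and hence $\varphi(0)=0$ by monotonicity, which is what legitimizes the nonexpansiveness/continuity claim at the end) is sound; uniqueness is immediate; the bound $a_n\le\varphi^{[n]}(a_0)\to 0$ uses monotonicity correctly; and the Cauchy step is the genuinely delicate part, which you handle properly. The minimal-$m$ device does force $m>n+1$ (since $a_n<\epsilon/(4J)<2\epsilon$), so $d(x_n,x_{m-1})<2\epsilon$ and hence $d(x_n,x_m)<5\epsilon/2$; the shift by $J$ steps with $\varphi^{[J]}(5\epsilon/2)<\epsilon/2$ plus the two $J$-step overheads bounded by $\epsilon/4$ each closes the contradiction. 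This is essentially the standard Boyd--Wong/Matkowski-type "bad pair with minimal index" argument; Matkowski's original route is organized differently, but your version is self-contained, uses only monotonicity of $\varphi$ and $\varphi^{[n]}(t)\to 0$, and proves exactly the Picard-operator conclusion the paper needs in Theorem \ref{MatkowskiTh}.
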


\begin{remark}
	The above theorem says that each Matkowski contraction on a complete metric space is a Picard operator.
\end{remark}

\subsection{Countable iterated function systems}

\begin{definition}
	Let $(X,d)$ be a compact metric space and $f_n : X \to X$ be  continuous functions for every $n \in \n$. The pair $\mathcal{S} = ((X,d), (f_n)_{n \geq 1})$ is called a countable iterated function system (for short, CIFS).% if $f_n$ are continuous functions on $X$.
\end{definition}

The fractal operator associated to the CIFS $\mathcal{S} = ((X,d), (f_n)_{n \geq 1})$ is the function $F_{\mathcal{S}} : \mathcal{P}_{cp}(X) \to \mathcal{P}_{cp}(X)$, defined as $$\displaystyle F_{\mathcal{S}}(K) = \overline{\underset{n \geq 1}{\bigcup}f_n(K)}$$ for every $K \in \mathcal{P}_{cp}(X)$. 

If the fractal operator $F_{\mathcal{S}}$ is Picard, then we say that the CIFS $\mathcal{S}$ has attractor and the fixed point of $F_{\mathcal{S}}$ is called the attractor of the CIFS $\mathcal{S}$.

\begin{theorem}[see Secelean \cite{JMAA-Secelean} Theorem 3.7]
	%[Dan Dumintru - 2013] %JMAA Secelean Theorem 3.7
	If the constitutive functions $f_n$ of the CIFS $\mathcal{S} = ((X,d), (f_n)_{n \geq 1})$ are Matkowski contractions, for every $n \geq 1$, then the fractal operator $F_{\mathcal{S}}$ is a Matkowski contraction. In particular, the CIFS has attractor.
	\label{F-Matkowski}
\end{theorem}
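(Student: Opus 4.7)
The plan is to exhibit a non-decreasing function $\varphi: [0,\infty) \to [0,\infty)$ with $\lim_{k \to \infty} \varphi^{[k]}(t) = 0$ for every $t > 0$, such that $h(F_{\mathcal{S}}(A), F_{\mathcal{S}}(B)) \leq \varphi(h(A,B))$ for every $A, B \in \mathcal{P}_{cp}(X)$, and then apply Matkowski's fixed point theorem (Theorem \ref{MatkowskiTh}) on the complete metric space $(\mathcal{P}_{cp}(X), h)$ to conclude that $F_{\mathcal{S}}$ is a Picard operator, so the CIFS has an attractor.

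First I would establish the standard estimate for the Hausdorff distance of countable unions: for any sequences $(A_n)_{n \geq 1}, (B_n)_{n \geq 1} \subset \mathcal{P}_{cp}(X)$ whose closed unions lie in $\mathcal{P}_{cp}(X)$,
$$h\!\left(\overline{\bigcup_{n \geq 1} A_n},\, \overline{\bigcup_{n \geq 1} B_n}\right) \leq \sup_{n \geq 1} h(A_n, B_n),$$
which is immediate from the sup-inf definition of $h$ together with the identity $d(x,\overline{C}) = d(x,C)$. Setting $A_n = f_n(A)$ and $B_n = f_n(B)$, and using that each $f_n$ is a $\varphi_n$-contraction with $\varphi_n$ non-decreasing (so that taking sup and inf in the pointwise estimate $d(f_n(x),f_n(y)) \leq \varphi_n(d(x,y))$ yields $h(f_n(A),f_n(B)) \leq \varphi_n(h(A,B))$), I obtain
$$h(F_{\mathcal{S}}(A), F_{\mathcal{S}}(B)) \leq \sup_{n \geq 1} \varphi_n(h(A,B)).$$
The natural candidate is therefore $\varphi(t) := \sup_{n \geq 1} \varphi_n(t)$, which is non-decreasing as a pointwise supremum of non-decreasing functions and, since $X$ is compact with $D := diam(X) < \infty$, may be assumed finite on $[0,D]$ by replacing each $\varphi_n$ with $\min(\varphi_n, D)$ without affecting the contraction inequality.

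The main obstacle is verifying $\varphi^{[k]}(t) \to 0$ for all $t > 0$, since a pointwise supremum of Matkowski functions need not be Matkowski in general. Here I would pass to the tight envelopes $\tilde{\varphi}_n(t) := \sup\{d(f_n(x),f_n(y)) : x,y \in X,\ d(x,y) \leq t\} \leq \varphi_n(t)$, which are upper semicontinuous, uniformly bounded by $D$, and still witness each $f_n$ as a Matkowski contraction, and then exploit the compactness of $X$ together with the continuity of the $f_n$ and the vanishing $\tilde{\varphi}_n(0)=0$ to control $\sup_n \tilde{\varphi}_n$ near $0$ via an equicontinuity-style argument, thereby forcing the iterates of $\varphi$ to drive every $t > 0$ to zero. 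Once $\varphi$ is shown to be a Matkowski function, Theorem \ref{MatkowskiTh} provides the unique fixed point of $F_{\mathcal{S}}$, i.e., the attractor of the CIFS.
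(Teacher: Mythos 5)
Your reduction to the two estimates $h\bigl(\overline{\bigcup_n A_n},\overline{\bigcup_n B_n}\bigr)\leq \sup_n h(A_n,B_n)$ and $h(f_n(A),f_n(B))\leq\varphi_n(h(A,B))$ is fine, but the step you yourself flag as the main obstacle is a genuine gap, and no equicontinuity or compactness argument can close it: with each $f_n$ allowed its own comparison function $\varphi_n$, the supremum $\sup_n\varphi_n$ (or any single comparison function for $F_{\mathcal{S}}$) need not exist as a Matkowski function, and the conclusion itself fails. Take $X=[0,1]$ and $f_n(x)=\left(1-\frac1n\right)x$; each $f_n$ is a Banach, hence Matkowski, contraction on a compact space, the tight envelopes are $\tilde\varphi_n(t)=\left(1-\frac1n\right)t$ with $\sup_n\tilde\varphi_n(t)=t$, and for $A=\{0\}$, $B=\{t\}$ one gets $h(F_{\mathcal{S}}(A),F_{\mathcal{S}}(B))=t$, so $F_{\mathcal{S}}$ satisfies no inequality $h(F_{\mathcal{S}}(A),F_{\mathcal{S}}(B))\leq\varphi(h(A,B))$ with $\varphi(t)<t$; moreover both $\{0\}$ and $[0,1]$ are fixed points of $F_{\mathcal{S}}$, so there is not even a unique attractor. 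Thus the statement is only true under a uniformity hypothesis, which is what the cited result of Secelean actually uses: all $f_n$ are $\varphi$-contractions for one common non-decreasing $\varphi$ with $\varphi^{[k]}(t)\to0$ (this is also how the theorem is applied in this paper, e.g.\ in Theorem \ref{f_nRakotch} a single $\psi(t)=t\alpha(t)$ works for every $n$).

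With that common $\varphi$ in hand, your argument is complete and essentially the standard one: $h(f_n(A),f_n(B))\leq\varphi(h(A,B))$ for every $n$ (monotonicity of $\varphi$ is what lets you pass from the pointwise bound to the Hausdorff bound), the union estimate gives $h(F_{\mathcal{S}}(A),F_{\mathcal{S}}(B))\leq\varphi(h(A,B))$ with the same $\varphi$, and since $X$ is compact, $F_{\mathcal{S}}$ maps $\mathcal{P}_{cp}(X)$ into itself and $(\mathcal{P}_{cp}(X),h)$ is complete, so Theorem \ref{MatkowskiTh} yields the attractor. In short: drop the attempt to build a Matkowski majorant out of heterogeneous $\varphi_n$'s (it is impossible in general) and instead invoke, or assume, the shared comparison function.
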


\subsection{Countable systems of data and interpolation functions}

Given a compact metric space $(Y,d)$, let us consider the countable system of points 
\begin{equation}
	\Delta = \{(x_n,y_n) \in \r \times Y, n \geq 0\}.
	\label{countableSys}
\end{equation}

If the sequence $(x_n)_{n \geq 0}$ is strictly increasing and bounded, and the sequence $(y_n)_{n \geq 0}$ is convergent, then the system of points defined in relation (\ref{countableSys}) is called a countable system of data. 

We set the notations $a = x_0$, $b = \lim\limits_{n\to \infty} x_n$, $m = y_0$ and $M = \lim\limits_{n\to \infty} y_n$. 

\begin{definition}
	In the above mentioned framework, an interpolation function corresponding to the countable system of data $\Delta$ is a continuous function $f:[a,b] \to Y$, such that $$f(x_n) = y_n,$$ for each $n \geq 0$.
\end{definition}

Note that
\begin{equation*}
\begin{aligned}
f(b) & = f(\lim\limits_{n \to \infty} x_n) \\
& \overset{f \, \text{continuous}}{=} \lim\limits_{n \to \infty} f(x_n) \\
& = \lim\limits_{n \to \infty}y_n\\
& = M.
\end{aligned}
\end{equation*}

\subsection{The family $(f_n)_n$ associated to $\Delta$}

Let $\Delta = \{(x_n,y_n) \in \r \times Y, n \geq 0\}$ be a countable system of data.

For each $n\geq 1$, let $l_n : [a,b] \to [x_{n-1}, x_n]$ be a homeomorphism for which there exists $L_n \in [0,1)$ such that
\begin{itemize}
\item[i)] \begin{equation*}
	|l_n(x) - l_n(x')| \leq L_n |x - x'|
\end{equation*}
for every $x, \, x' \in [a,b]$;
\item[ii)]
\begin{equation*}
	l_n(a) = x_{n-1} \quad \text{and} \quad l_n(b) = x_n;
	\label{ln-1}
\end{equation*}
\item[iii)] $$\underset{n \geq 1}{\sup} L_n <1.$$
\end{itemize}

For each $n\geq 1$, let $W_n : [a,b] \times Y \to Y$ be a continuous function such that
\begin{itemize}
\item[j)] \begin{equation*}
	W_n(a,m) = y_{n-1}\quad \text{ and } \quad W_n(b,M) = y_n;
	\label{defW_n}
\end{equation*}
\item[jj)] $\lim\limits_{n \to \infty} diam(Im\, W_n) = 0$.
\end{itemize}

For $n \geq 1$, we define $f_n : [a,b] \times Y \to [a,b] \times Y$ as
\begin{equation*}
	f_n(x,y) =(l_n(x), W_n(x,y)),
\end{equation*}
for every $x \in [a,b]$ and $y \in Y$.

\subsection{The operator $\textbf{T}$}

Let us consider $\mathcal{C}([a,b]) = \{ f:[a,b] \to Y \lvert\,f(a) = m \text{ and } f(b) = M, f\text{ - continuous}\}$ endowed with the uniform metric $d_{\mathcal{C}([a,b])}$.

\begin{remark}
	 The space $(\mathcal{C}([a,b]), d_{\mathcal{C}([a,b])})$ is a complete metric space.
\end{remark}

Let $\Delta = \{(x_n,y_n) \in \r \times Y, n \geq 0\}$ be a countable system of data.

For $f \in \mathcal{C}([a,b])$, we consider the function $\textbf{T}_f: [a,b] \to Y$ given as follows:
\begin{equation*}
	\textbf{T}_f(x) = \begin{cases}
	\begin{aligned}
	&W_n(l_n^{-1}(x), f(l_n^{-1}(x))),& \quad &\text{ if } x \in [x_{n-1},x_n]&\\
	&M, & \quad  &\text{ if } x = b.&
	\end{aligned}
	\end{cases}
\end{equation*}

\begin{claim}
	$\textbf{T}_f$ is well defined.
	\label{claim1}
\end{claim}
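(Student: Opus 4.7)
The plan is to verify that the piecewise formula defining $\textbf{T}_f$ assigns a single, unambiguous value at every $x\in[a,b]$. First I would note that since $(x_n)_{n\geq 0}$ is strictly increasing with $\lim_{n\to\infty} x_n = b$, one has $\bigcup_{n\geq 1}[x_{n-1},x_n] = [a,b)$, and since $x_n < b$ for every $n\geq 1$ the point $b$ is not captured by any of these intervals. Consequently, adding the separate prescription $\textbf{T}_f(b)=M$ covers $[a,b]$ without overlap between the two clauses of the definition. The only points where the formula could assign conflicting values are therefore the common endpoints $x_n$ (for $n\geq 1$), each of which belongs to both $[x_{n-1},x_n]$ and $[x_n,x_{n+1}]$.

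Next I would compute the two candidate values at such an $x_n$. Since $l_n:[a,b]\to[x_{n-1},x_n]$ is a homeomorphism satisfying $l_n(a)=x_{n-1}$ and $l_n(b)=x_n$ (and similarly for $l_{n+1}$), we have $l_n^{-1}(x_n)=b$ and $l_{n+1}^{-1}(x_n)=a$. Combining this with the boundary conditions $f(a)=m$ and $f(b)=M$ built into the definition of $\mathcal{C}([a,b])$, the $[x_{n-1},x_n]$-clause yields
\begin{equation*}
W_n(l_n^{-1}(x_n), f(l_n^{-1}(x_n))) = W_n(b, f(b)) = W_n(b, M),
\end{equation*}
while the $[x_n,x_{n+1}]$-clause yields
\begin{equation*}
W_{n+1}(l_{n+1}^{-1}(x_n), f(l_{n+1}^{-1}(x_n))) = W_{n+1}(a, f(a)) = W_{n+1}(a, m).
\end{equation*}
Property (j) of the maps $W_n$ gives $W_n(b,M)=y_n$ and $W_{n+1}(a,m)=y_n$, so the two expressions coincide and the definition is unambiguous.

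The argument is essentially bookkeeping, so there is no real obstacle; the only subtlety is recognizing that the compatibility relations in property (j) are tailored exactly to cancel the potential mismatch at the junction points $x_n$, and that this cancellation is triggered by the boundary conditions $f(a)=m$, $f(b)=M$ encoded in $\mathcal{C}([a,b])$. Note that well-definedness is a purely set-theoretic statement here and does not yet assert continuity of $\textbf{T}_f$ (in particular continuity at $b$), which I would expect to be the content of a subsequent claim.
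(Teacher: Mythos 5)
Your argument is correct and matches the paper's proof: the only possible ambiguity is at the junction points $x_n$, where both clauses give $W_n(b,M)=y_n=W_{n+1}(a,m)$ via $l_n^{-1}(x_n)=b$, $l_{n+1}^{-1}(x_n)=a$, the boundary conditions $f(a)=m$, $f(b)=M$, and property (j). Your extra remark that $b$ lies in none of the intervals $[x_{n-1},x_n]$ is a harmless bit of added bookkeeping that the paper leaves implicit.
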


Indeed, since $x_n \in [x_{n-1},x_n]$, by definition of $\textbf{T}_f$, on the one hand we have
	\begin{equation*}
	\begin{aligned}
	\textbf{T}_f(x_n) &= W_n(l_n^{-1}(x_n), f(l_n^{-1}(x_n)))\\
	&= W_n(b, f(b))\\
	&= W_{n}(b, M)\\
	&= y_n
	\end{aligned}
	\end{equation*}
	and on the other hand, since $x_n \in [x_{n},x_{n+1}]$, we have 
	\begin{equation*}
	\begin{aligned}
	\textbf{T}_f(x_n) &= W_{n+1}(l_{n+1}^{-1}(x_{n}), f(l_{n+1}^{-1}(x_{n})))\\
	&= W_{n+1}(a, f(a))\\
	&= W_{n+1}(a, m)\\
	&= y_n,
	\end{aligned}
	\end{equation*}
for all $n \geq 1$. 

\begin{claim}
	$\textbf{T}_f \in \mathcal{C}([a,b])$.
	\label{claim2}
\end{claim}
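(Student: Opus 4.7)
The plan is to verify three things: (a) the boundary values $\textbf{T}_f(a)=m$ and $\textbf{T}_f(b)=M$ built into the definition of $\mathcal{C}([a,b])$, (b) continuity of $\textbf{T}_f$ on every closed subinterval $[x_{n-1},x_n]$ for $n\ge 1$, and (c) continuity at the accumulation point $b$. The value $\textbf{T}_f(b)=M$ is immediate from the definition, while $\textbf{T}_f(a)=m$ is the exact computation of Claim \ref{claim1} applied at $x_0$: on $[x_0,x_1]$ we have $\textbf{T}_f(a)=W_1(l_1^{-1}(a),f(l_1^{-1}(a)))=W_1(a,m)=y_0=m$.

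For continuity on a fixed interval $[x_{n-1},x_n]$, I would note that $l_n^{-1}:[x_{n-1},x_n]\to[a,b]$ is a homeomorphism, $f$ is continuous, and $W_n$ is continuous on $[a,b]\times Y$; hence the composition $x\mapsto W_n(l_n^{-1}(x),f(l_n^{-1}(x)))$ is continuous on $[x_{n-1},x_n]$. At each interior node $x_n$ with $n\ge 1$, the two adjacent pieces agree, since Claim \ref{claim1} already establishes that both formulas give $y_n$, so the piecewise definition glues into a continuous map on $[a,b)$.

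The main obstacle is continuity at $b$. Here I would use condition jj). Take any sequence $(t_k)_{k\ge 1}\subset[a,b)$ with $t_k\to b$. For each $k$ choose $n_k\ge 1$ with $t_k\in[x_{n_k-1},x_{n_k}]$. Since $(x_n)_{n\ge 0}$ is strictly increasing with limit $b$, a bounded subsequence of $(n_k)$ would confine the corresponding $t_k$ to some $[a,x_N]$ with $x_N<b$, contradicting $t_k\to b$; therefore $n_k\to\infty$. By construction
\begin{equation*}
\textbf{T}_f(t_k)=W_{n_k}\bigl(l_{n_k}^{-1}(t_k),f(l_{n_k}^{-1}(t_k))\bigr)\in Im\,W_{n_k},
\end{equation*}
and also $y_{n_k}=W_{n_k}(b,M)\in Im\,W_{n_k}$, so
\begin{equation*}
d\bigl(\textbf{T}_f(t_k),y_{n_k}\bigr)\le diam(Im\,W_{n_k}).
\end{equation*}
By jj) the right-hand side tends to $0$, and since $y_{n_k}\to M$ (because $(y_n)$ converges to $M$), the triangle inequality yields $\textbf{T}_f(t_k)\to M=\textbf{T}_f(b)$. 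Thus $\textbf{T}_f$ is continuous at $b$, which together with the previous paragraph shows $\textbf{T}_f\in\mathcal{C}([a,b])$.
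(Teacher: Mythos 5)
Your proof is correct and follows essentially the same route as the paper: the same boundary-value computations, continuity of each piece with agreement at the nodes $x_n$ (via Claim \ref{claim1}), and at $b$ the same key estimate combining $diam(Im\,W_n)\to 0$ with $y_n\to M$, merely phrased sequentially instead of with the paper's $\varepsilon$-argument.
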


Indeed, on the one hand we have
\begin{equation*}
	\begin{aligned}
		\textbf{T}_f(a) & = W_1(l_1^{-1}(a), f(l_1^{-1}(a))) \\
			& = W_1(a,f(a)) \\
			& = W_1(a,m) \\
			& = y_0\\
			& = m
	\end{aligned}
\end{equation*}
and by definition $$\textbf{T}_f(b) = M.$$

On the other hand, since $W_n$ are continuous, it is clear that $\textbf{T}_f$ is continuous on $(x_{n-1},x_n)$ for all $n \geq 1$. We need to prove that $\textbf{T}_f$ is right continuous at $a$, continuous at $x_n$ for all $n \geq 1$ and left continuous at $b$.

For $n \geq 1$, we have
\begin{equation*}
	\begin{aligned}
		\lim\limits_{x \searrow x_n} \textbf{T}_f(x) & = \lim\limits_{x \searrow x_n}W_{n+1}(l_{n+1}^{-1}(x), f(l_{n+1}^{-1}(x))) \\
		& = W_{n+1}(a,f(a)) \\
		& = W_{n+1}(a,m)\\
		& = y_n\\
		& = \textbf{T}_f(x_n)
	\end{aligned}
\end{equation*}
and
\begin{equation*}
\begin{aligned}
\lim\limits_{x \nearrow x_n} \textbf{T}_f(x) & = \lim\limits_{x \nearrow x_n}W_{n}(l_{n}^{-1}(x), f(l_{n}^{-1}(x))) \\
& = W_{n}(b,f(b)) \\
& = W_n(b,M) \\
& = y_n\\
& = \textbf{T}_f(x_n),
\end{aligned}
\end{equation*}
which proves that $Tf$ is continuous on $(a,b)$.

Since 
\begin{equation*}
\begin{aligned}
	\lim\limits_{x \searrow a} \textbf{T}_f(x) &= \lim\limits_{x \searrow a} W_1(l_1^{-1}(x), f(l_1^{-1}(x)))\\
	& = W_{1}(a,f(a)) \\
	& = W_{1}(a,m)\\
	& = y_0\\
	%& = \textbf{T}_f(x_0)\\
	& = \textbf{T}_f(a),
\end{aligned}	
\end{equation*}
we infer that $\textbf{T}_f$ is right continuous at $a$. 

Now we prove that $\textbf{T}_f$ is left continuous at $b$.

Let $\varepsilon > 0$ be fixed, but arbitrary chosen. 

As $\lim\limits_{n \to \infty} y_n = M$ and $\lim\limits_{n \to \infty} diam(Im\, W_n) = 0$, there exists $n_{\varepsilon} \in \n$ such that 
\begin{equation}
	d(M- y_n) < \frac{\varepsilon}{2}
	\label{My_n}
\end{equation}
and 
\begin{equation}
	diam(W_n) < \frac{\varepsilon}{2}
	\label{My_n2}
\end{equation}
for every $n \geq 1$, $n \geq n_{\varepsilon}$.

For $x \in (x_{n_{\varepsilon}}, b)$, as $(x_n)_n$ is a strictly increasing sequence and $\lim\limits_{n \to \infty} x_n = b$, there exists $n_x \in \n$, $n_x \geq n_{\varepsilon}$ such that $x \in [x_{n_x}, x_{n_x+1}]$, so we have
\begin{equation*}
\begin{aligned}
 	d(\textbf{T}_f(x), \textbf{T}_f(b)) &\leq d(\textbf{T}_f(x), \textbf{T}_f(x_{n_x})) + d(y_{n_x}, M) \\
 	& = d(W_{n_x+1}(l_{n_x+1}^{-1}(x), f(l_{n_x+1}^{-1}(x))), W_{n_x+1}(l_{n_x+1}^{-1}(x_{n_x}), f(l_{n_x+1}^{-1}(x_{n_x})))) \\
 	& \qquad \qquad \qquad \qquad + d(M, y_{n_x})\\
 	&\leq diam(Im W_{n_x+1}) + d(M,y_{n_x}) \\
 	& \overset{(\ref{My_n}) \, \& \, (\ref{My_n2})}{\leq} \frac{\varepsilon}{2} + \frac{\varepsilon}{2}\\
 	& =\varepsilon.
\end{aligned}
\end{equation*}
Hence, \begin{equation*}
\begin{aligned}
\lim\limits_{x \nearrow b} \textbf{T}_f(x)  = \textbf{T}_f(b), \text{ i.e. }
\end{aligned}
\end{equation*}
$\textbf{T}_f$ is left continuous at $b$, which concludes the proof that $\textbf{T}_f$ is continuous on $[a,b]$.

\vspace{20pt}

Thus, from Claim \ref{claim1} and Claim \ref{claim2}, the operator $T: \mathcal{C}([a,b])  \to \mathcal{C}([a,b])$, defined as $$T(f) = \textbf{T}_f$$ for every $f \in \mathcal{C}([a,b])$ is well defined.

\section{Main results}

\begin{theorem}
	Let $\Delta = \{(x_n,y_n) \in \r \times Y, n \geq 0\}$ be a countable system of data. If the functions $W_n$ are Matkowski contractions with respect to the second argument, i.e. there exists a non-decreasing function $\varphi : [0,\infty) \to [0,\infty)$ such that $\lim\limits_{n \to\infty} \varphi^n(t) = 0$ for all $t>0$ and 
	\begin{equation}
		d(W_n((x,y)),W_n((x,y'))) \leq \varphi(d(y,y'))
		\label{MatkWn}
	\end{equation}
	for all $x \in [a,b]$ and $y,y'\in Y$, then $T$ is a Matkowski contraction. 
	\label{T-contraction}
\end{theorem}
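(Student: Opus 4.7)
The plan is to show directly that $T$ satisfies the $\varphi$-contraction inequality with the \emph{same} $\varphi$ that governs the family $(W_n)_{n\geq 1}$, and then invoke the definition of a Matkowski contraction. The crucial point that makes the countable setting work is that (\ref{MatkWn}) provides a single modulus $\varphi$ that is non-decreasing and satisfies $\varphi^{[n]}(t)\to 0$ \emph{uniformly} in $n$, so no compatibility issue between the different pieces of $T_f$ arises.

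First, I would fix arbitrary $f, g \in \mathcal{C}([a,b])$ and write $\delta := d_{\mathcal{C}([a,b])}(f,g)$. I would then estimate $d(\mathbf{T}_f(x), \mathbf{T}_g(x))$ pointwise, splitting into two cases. At $x = b$ we have $\mathbf{T}_f(b) = \mathbf{T}_g(b) = M$, so the distance vanishes. For $x \in [x_{n-1}, x_n]$ with $n \geq 1$, both $\mathbf{T}_f(x)$ and $\mathbf{T}_g(x)$ are defined by $W_n$ evaluated at the \emph{same} first coordinate $l_n^{-1}(x)$. Applying (\ref{MatkWn}) with $y = f(l_n^{-1}(x))$ and $y' = g(l_n^{-1}(x))$ yields
\begin{equation*}
d(\mathbf{T}_f(x), \mathbf{T}_g(x)) \leq \varphi\bigl(d(f(l_n^{-1}(x)), g(l_n^{-1}(x)))\bigr) \leq \varphi(\delta),
\end{equation*}
where the last inequality uses that $d(f(t),g(t)) \leq \delta$ for every $t \in [a,b]$ together with the monotonicity of $\varphi$.

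Taking supremum over $x \in [a,b]$, this gives $d_{\mathcal{C}([a,b])}(T(f), T(g)) \leq \varphi(\delta) = \varphi(d_{\mathcal{C}([a,b])}(f,g))$, so $T$ is a $\varphi$-contraction on $\mathcal{C}([a,b])$. Since the same $\varphi$ is non-decreasing and satisfies $\lim_{n\to\infty}\varphi^{[n]}(t)=0$ for every $t>0$ by hypothesis, $T$ is a Matkowski contraction by definition.

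There is no real obstacle in this argument; the proof is essentially a pointwise application of (\ref{MatkWn}) combined with the monotonicity of $\varphi$. The only thing worth highlighting is precisely why it works in the countable regime: because (\ref{MatkWn}) uses a single $\varphi$ valid for all $n$, the bound $\varphi(\delta)$ obtained on each piece $[x_{n-1}, x_n]$ is independent of $n$, so nothing degrades when we take the supremum over the infinitely many pieces and let $x \nearrow b$. Had each $W_n$ come with its own modulus $\varphi_n$, the supremum $\sup_n \varphi_n$ need not share the Matkowski properties, and the argument would break down.
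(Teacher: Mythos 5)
Your proposal is correct and follows essentially the same route as the paper's proof: a pointwise estimate on each piece $[x_{n-1},x_n]$ via (\ref{MatkWn}) and the monotonicity of $\varphi$, the trivial case at $x=b$, and a supremum over $x$ to get $d_{\mathcal{C}([a,b])}(Tf,Tg)\leq\varphi(d_{\mathcal{C}([a,b])}(f,g))$. The only quibble is your phrase that $\varphi^{[n]}(t)\to 0$ ``uniformly in $n$'' --- what matters is simply that a single modulus $\varphi$ serves all the $W_n$, exactly as you explain afterwards.
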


\begin{proof}
	Let $g,h \in \mathcal{C}([a,b])$. 
	
	It is obvious that
	\begin{equation}
	0 = d(M, M) = d(Tg(b),Th(b)) \leq \varphi(d_{\mathcal{C}([a,b])}(g,h)).
	\label{Tinb}
	\end{equation}
	 
	Let $x\in[a,b)$ and $n \geq 1$ such that $x \in [x_{n-1},x_n]$. %Then, by definition of $l_n$, it is clear that $l_n^{-1}(x) \leq \underset{x \in [a,b]}{\sup} x$. 
	
	Then, we have
	\begin{equation}
	\begin{aligned}
		d(Tg(x),Th(x)) & =  d(W_n(l_n^{-1}(x), g(l_n^{-1}(x))), W_n(l_n^{-1}(x), h(l_n^{-1}(x)))) \\ &	\overset{(\ref{MatkWn})}{\leq} \varphi(d(g(l_n^{-1}(x)),h(l_n^{-1}(x)))) \\ & \leq \varphi( \underset{u \in [a,b]}{\sup}d(g(u),h(u))) \\ &= \varphi(d_{\mathcal{C}([a,b])}(g,h)).
	\end{aligned}
	\label{Tall}
	\end{equation}
	
	Via (\ref{Tinb}) and (\ref{Tall}), we get 
	\begin{equation*}
		d_{\mathcal{C}([a,b])}(Tg,Th) = \underset{x \in [a,b]}{\sup} d(Tg(x),Th(x)) \leq \varphi(d_{\mathcal{C}([a,b])}(g,h)),
	\end{equation*}
	which concludes our proof.
\end{proof}

\begin{theorem}
	Let $\Delta = \{(x_n,y_n) \in \r \times Y, n \geq 0\}$ be a countable system of data such that $W_n$ are Lipschitz with respect to the first variable and Rakotch contractions in the second variable, i.e. there exists $L > 0$, and a non-decreasing function $\varphi : [0,\infty) \to [0,\infty)$, satisfying $\frac{\varphi(t)}{t}< 1$ for all $t > 0$ and $t \to \frac{\varphi(t)}{t}$ is non-increasing, such that 
	\begin{equation*}
		d(W_n((x,y)),W_n((x',y'))) \leq L|x-x'| + \varphi(d(y,y'))
	\end{equation*}
	for all $(x,y),\,(x',y') \in [a,b] \times Y$, $n\geq 1$.
	
	Then, $f_n $ are Rakotch contractions with respect to the metric $d_{\theta}$ described by 
	\begin{equation*}
	d_{\theta} ((x,y),(x',y')) :=  |x-x'|+\theta d(y,y')
	\end{equation*}
	for all $(x,y),\,(x',y') \in [a,b] \times Y$, where $\theta = \frac{1- \underset{n \geq 1}{\sup}L_n}{2(L+1)} \in (0,1)$.
	\label{f_nRakotch}
\end{theorem}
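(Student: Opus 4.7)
The plan is to first apply the hypotheses directly to estimate $d_\theta(f_n(x,y),f_n(x',y'))$ and then repackage the resulting bound as a Rakotch estimate in $d_\theta$. Writing $u:=|x-x'|$, $v:=d(y,y')$, $s:=\sup_{n\ge 1}L_n\in[0,1)$ and $c:=s+\theta L$, the Lipschitz bound on $l_n$ combined with the hypothesis on $W_n$ gives at once
\[
d_\theta\bigl(f_n(x,y),f_n(x',y')\bigr)\le L_n u+\theta\bigl(Lu+\varphi(v)\bigr)\le cu+\theta\varphi(v),
\]
and from $\theta=(1-s)/[2(L+1)]$ one deduces in a single line that $\theta\in(0,1)$ and $c<(1+s)/2<1$.

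The substance of the proof is to exhibit a function $\tilde\varphi:[0,\infty)\to[0,\infty)$ satisfying the Rakotch conditions and majorising $cu+\theta\varphi(v)$ as a function of $u+\theta v$. I would take
\[
\tilde\varphi(t):=\sup\bigl\{\,cu'+\theta\varphi(v'):u',v'\ge 0,\ u'+\theta v'\le t\,\bigr\},\qquad t\ge 0,
\]
so that the estimate $d_\theta(f_n(x,y),f_n(x',y'))\le\tilde\varphi(d_\theta((x,y),(x',y')))$ is built in. It then remains to verify that $\tilde\varphi$ is non-decreasing (immediate), that $\tilde\alpha(t):=\tilde\varphi(t)/t$ is non-increasing on $(0,\infty)$, and that $\tilde\alpha(t)<1$ for every $t>0$. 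For the non-increasing ratio I would use the reformulation of the hypothesis ``$\alpha(v)=\varphi(v)/v$ non-increasing'' as $\varphi(rv)\le r\varphi(v)$ for $r\ge 1$: if $0<t_1<t_2$ and $r=t_2/t_1$, every admissible $(u',v')$ for $t_2$ rescales to an admissible $(u'/r,v'/r)$ for $t_1$ with $cu'+\theta\varphi(v')\le r\bigl(c(u'/r)+\theta\varphi(v'/r)\bigr)\le r\tilde\varphi(t_1)$, whence $\tilde\varphi(t_2)/t_2\le\tilde\varphi(t_1)/t_1$.

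The main obstacle is the strict bound $\tilde\alpha(t)<1$, because merely arguing that $cu'+\theta\varphi(v')<t$ pointwise does not by itself rule out $\tilde\varphi(t)=t$ in the supremum. I would resolve this by splitting the sup at the threshold $v_0:=t/(2\theta)\in(0,t/\theta)$. On $\{v'\le v_0\}$, the crude bound $\alpha(v')<1$ together with $u'\le t-\theta v'$ yields
\[
cu'+\theta\varphi(v')\le cu'+\theta v'\le ct+(1-c)\theta v_0=\tfrac{1+c}{2}\,t;
\]
on $\{v'\ge v_0\}$, monotonicity of $\alpha$ gives $\alpha(v')\le\alpha(v_0)<1$ and hence $cu'+\theta\varphi(v')\le\max(c,\alpha(v_0))(u'+\theta v')\le\max\bigl(c,\alpha(t/(2\theta))\bigr)\,t$. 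Combining,
\[
\tilde\varphi(t)\le t\cdot\max\Bigl(\tfrac{1+c}{2},\,\max\bigl(c,\alpha(t/(2\theta))\bigr)\Bigr)<t,
\]
so $\tilde\alpha(t)<1$ for every $t>0$. With the three Rakotch properties of $\tilde\varphi$ verified, $f_n$ is a $\tilde\varphi$-contraction with respect to $d_\theta$, i.e.\ a Rakotch contraction, which concludes the proof.
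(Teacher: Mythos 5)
Your proof is correct, but it reaches the Rakotch property by a different mechanism than the paper. Both arguments begin with the same estimate $d_\theta(f_n(x,y),f_n(x',y'))\le cu+\theta\varphi(v)$, where $u=|x-x'|$, $v=d(y,y')$ and $c=\sup_{n\ge1}L_n+\theta L<\frac{1+\sup_{n\ge1}L_n}{2}<1$. The paper then produces an explicit comparison function in one algebraic pass: it writes $\theta\varphi(v)=\theta\frac{\varphi(v)}{u+v}(u+v)$, uses that $\varphi$ is non-decreasing to pass to $\varphi(u+v)$, bounds the ratio multiplying $u$ by $1$, and, using that $t\mapsto\varphi(t)/t$ is non-increasing, evaluates the ratio multiplying $v$ at $u+\theta v$; this yields $d_\theta(f_n(x,y),f_n(x',y'))\le\alpha(t)\,t$ with $t=d_\theta((x,y),(x',y'))$ and $\alpha(t)=\max\{\sup_{n\ge1}L_n+\theta(L+1),\varphi(t)/t\}$, which is visibly non-increasing and $<1$, so $\psi(t)=t\alpha(t)$ does the job. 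You instead take the optimal envelope $\tilde\varphi(t)=\sup\{cu'+\theta\varphi(v'):u'+\theta v'\le t\}$, prove the non-increasing ratio via the subhomogeneity $\varphi(rv)\le r\varphi(v)$ for $r\ge1$, and obtain the strict bound $\tilde\varphi(t)<t$ by splitting the supremum at $v_0=t/(2\theta)$ --- an extra idea the paper does not need, since its explicit maximum is manifestly $<1$. Your route is more abstract and more flexible (it converts any estimate of the form $cu+\theta\varphi(v)$ into a Rakotch bound without having to guess the final comparison function), at the price of these additional verifications; the paper's route is shorter and yields a concrete comparison function, which in fact dominates your $\tilde\varphi$. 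Two minor points worth making explicit in your write-up: the case $v'=0$ in the bound $\varphi(v')\le v'$ uses $\varphi(0)=0$, which follows from $\varphi$ being non-decreasing with $\varphi(t)<t$ for all $t>0$; and $\tilde\varphi(t)>0$ for $t>0$ (needed if one insists, as in the paper's definition, that the ratio function map into $(0,\infty)$) follows from $\tilde\varphi(t)\ge ct$ with $c\ge\theta L>0$.
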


\begin{proof}
	For all $(x,y),\, (x',y') \in [a,b]\times Y$, $(x,y) \neq (x',y')$ and $n \geq 1$, we have
	%\begin{equation*}
		\begin{align*}
		 d_{\theta} (f_n(x,y),f_n(x',y')) & = d_{\theta} ((l_n(x),W_n(x,y)),(l_n(x'),W_n(x',y'))) \\
		 & = |l_n(x) - l_n(x')| + \theta d(W_n(x,y), W_n(x',y'))\\
		 & \leq L_n |x-x'| + \theta (L|x-x'| + \varphi(d(y,y'))) \\
		 & = (L_n + \theta L) |x-x'| \\&  \qquad \qquad + \theta \dfrac{\varphi(d(y,y'))}{|x-x'| + d(y,y')} (|x-x'| + d(y,y'))
		 \\&\overset{\varphi \text{ non-decreasing}}{\leq} (L_n + \theta L) |x-x'| + \\&  \qquad \qquad  \theta \dfrac{\varphi(|x-x'| + d(y,y'))}{|x-x'| + d(y,y')} (|x-x'| + d(y,y'))\\
		 & =  \left[L_n + \theta \left(L + \dfrac{\varphi(|x-x'|+d(y,y'))}{|x-x'| + d(y,y')}\right)\right] |x-x'| \\&  \qquad \qquad + \theta \dfrac{\varphi(|x-x'|+ d(y,y'))}{|x-x'| + d(y,y')} d(y,y') \\
		 & \overset{\overset{t \to \frac{\varphi(t)}{t}}{ \text{ non-increasing}}}{\leq}  \left[L_n + \theta \left(L + \dfrac{\varphi(|x-x'|+d(y,y'))}{|x-x'| + d(y,y')}\right)\right] |x-x'| \\&  \qquad \qquad + \theta \dfrac{\varphi(|x-x'|+ \theta d(y,y'))}{|x-x'| + \theta d(y,y')} d(y,y') \\
		 & \overset{\frac{\varphi(t)}{t} < 1,\, (\forall)\, t >0}{\leq} [\underset{n \geq 1}{\sup}L_n + \theta \left(L + 1\right) ]|x-x'| \\& \qquad \qquad + \theta \dfrac{\varphi(|x-x'|+ \theta d(y,y'))}{|x-x'| + \theta d(y,y')} d(y,y').
		\end{align*}
%\end{equation*}

	Thus, we get 
	\begin{equation*}
	\begin{gathered}
		 d_{\theta} (f_n(x,y),f_n(x',y')) \\
		\leq \max\left\{ \underset{n \geq 1}{\sup} L_n + \theta \left(L + 1\right) , \dfrac{\varphi(d_{\theta}((x,y),(x',y')))}{d_{\theta}((x,y),(x',y'))}\right\} d_{\theta}((x,y),(x',y'))
	\end{gathered}
	\end{equation*}
	for all $(x,y),\, (x',y') \in [a,b]\times Y$, $(x,y) \neq (x',y')$.
	
	Let us consider the map $\alpha : (0,\infty) \to (0,1)$ defined as
	\begin{equation*}
		\alpha(t) = \max\left\{ \underset{n \geq 1}{\sup} L_n + \theta \left(L + 1\right), \dfrac{\varphi(t)}{t}\right\}
	\end{equation*}
	for all $t > 0$, and $n \geq 1$.
	
	Since $\frac{\varphi(t)}{t} < 1$ for every $t >0$ and
	\begin{equation*}
	\begin{aligned}
		\underset{n \geq 1}{\sup} L_n + \theta \left(L + 1\right) & =  \underset{n \geq 1}{\sup} L_n +\frac{1- \underset{n \geq 1}{\sup}L_n}{2(L+1)} \left(L + 1\right) \\
		& = \underset{n \geq 1}{\sup} L_n + \frac{1-\underset{n \geq 1}{\sup}L_n}{2} \\
		& <1,
	\end{aligned}
	\end{equation*} 
	it is clear that $\alpha(t) \in (0,1)$ for every $t > 0$.
	
	Since $t\to \frac{\varphi(t)}{t}$ is non-increasing, we infer that $\alpha$ is non-increasing.
	
	Thus, considering $\psi : [0, \infty) \to [0, \infty)$, given by $\psi(t) =  t \alpha(t)$ for all $t\geq 0$, we get 
	\begin{equation*}
	\begin{aligned}
	d_{\theta} (f_n(x,y),f_n(x',y')) 
	&\leq \alpha(d_{\theta}((x,y),(x',y'))) d_{\theta}((x,y),(x',y'))\\
	& = \psi(d_{\theta}((x,y),(x',y'))),
	\end{aligned}
	\end{equation*}
	for every $(x,y),\, (x',y') \in [a,b]\times Y$.
	
	Since $\dfrac{\psi(t)}{t} = \alpha(t) < 1$ for every $t>0$ and $\alpha$ is non-increasing, we conclude that $f_n$ are Rakotch contractions with respect to $d_{\theta}$.
\end{proof}

Let $(x_n,y_n) \subseteq [a,b]\times Y$. Since $Y$ is compact, there exists a subsequence $(y_{n_k})_{n}$ of $(y_n)_{n}$ and $y \in Y$ such that $\lim\limits_{k \to \infty}y_{n_k} = y$. Since $[a,b]$ is compact, there exists a subsequence $(x_{n_{k_p}})_p$ of $(x_{n_k})_k$ and $x \in [a,b]$ such that $\lim\limits_{p \to \infty} x_{n_{k_p}} = x$. We have $\lim\limits_{p \to \infty} d_{\theta}(x_{n_{k_p}}, y_{n_{k_p}}) = (x,y) \in [a,b]\times Y$. Thus, $([a,b]\times Y, d_{\theta})$ is compact, and from Theorem \ref{F-Matkowski} and Remark \ref{remark1}, we get the following:

\begin{remark}
	Let $\Delta = \{(x_n,y_n) \in \r \times Y, n \geq 0\}$ be a countable system of data. If the functions $f_n$ are Rakotch contractions with respect to the metric $d_{\theta}$ (in particular, if the conditions stated in Theorem 3.2 are satisfied), then the CIFS $\mathcal{S} = (([a,b] \times Y,d_{\theta}), (f_n)_{n\geq 1})$ has attractor, so there exists a unique $A_{\mathcal{S}} \in \mathcal{P}_{cp}([a,b]\times Y)$ such that 
	\begin{equation*}
	F_{\mathcal{S}}(A_{\mathcal{S}})= A_{\mathcal{S}}.
	\end{equation*}
	\label{Remark2}
\end{remark}

\begin{theorem}
	Let $\Delta = \{(x_n,y_n) \in \r \times Y, n \geq 0\}$ be a countable system of data such that $W_n$ satisfy the hypothesis from Theorem \ref{f_nRakotch}. Then there exists an interpolation function $f_*$ corresponding to $\Delta$ such that its graph is the attractor of the countable iterated function system $\mathcal{S} = (([a,b] \times Y,d_{\theta}), (f_n)_{n\geq 1})$. 
	\label{T3}
\end{theorem}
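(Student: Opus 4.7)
The plan is to combine three ingredients: the Matkowski contractivity of the operator $T$ from Section 2.6 (to produce the candidate interpolation function as a fixed point), the existence of the attractor coming from Remark \ref{Remark2}, and a closure argument showing that the graph of the fixed point equals the attractor.

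First, I would observe that the hypothesis on $W_n$ in Theorem \ref{f_nRakotch} forces $W_n$ to be Rakotch in the second variable (take $x=x'$), and hence Matkowski in the second variable by Remark \ref{remark1} ii). Therefore Theorem \ref{T-contraction} applies and yields that $T : \mathcal{C}([a,b]) \to \mathcal{C}([a,b])$ is a Matkowski contraction on the complete metric space $(\mathcal{C}([a,b]), d_{\mathcal{C}([a,b])})$. By Theorem \ref{MatkowskiTh}, $T$ admits a unique fixed point $f_* \in \mathcal{C}([a,b])$.

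Second, I would verify that $f_*$ is an interpolation function for $\Delta$. The conditions $f_*(a) = m = y_0$ and $f_*(b) = M$ are built into the definition of $\mathcal{C}([a,b])$, and for each $n \geq 1$, applying $Tf_* = f_*$ at $x_n$ (viewed as the right endpoint of $[x_{n-1},x_n]$) gives $f_*(x_n) = Tf_*(x_n) = W_n(l_n^{-1}(x_n), f_*(l_n^{-1}(x_n))) = W_n(b, M) = y_n$, using $l_n^{-1}(x_n) = b$ and $f_*(b) = M$.

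Third, since Theorem \ref{f_nRakotch} supplies Rakotch contractivity of $f_n$ on $([a,b]\times Y, d_\theta)$, Remark \ref{Remark2} produces a unique attractor $A_{\mathcal{S}} \in \mathcal{P}_{cp}([a,b]\times Y)$ for the CIFS. The heart of the proof is then to show that $G(f_*):=\{(x,f_*(x)):x\in[a,b]\}$ is also a fixed point of $F_{\mathcal{S}}$ in $\mathcal{P}_{cp}([a,b]\times Y)$, whence uniqueness forces $G(f_*)=A_{\mathcal{S}}$. Compactness of $G(f_*)$ follows from continuity of $f_*$ on $[a,b]$. For each $n\geq 1$, the substitution $u=l_n(x)$ together with $Tf_*=f_*$ yields
\begin{equation*}
f_n(G(f_*))=\{(l_n(x),W_n(x,f_*(x))):x\in[a,b]\}=\{(u,Tf_*(u)):u\in[x_{n-1},x_n]\}=\{(u,f_*(u)):u\in[x_{n-1},x_n]\}.
\end{equation*}
Taking the union over $n\geq 1$ gives $\{(u,f_*(u)):u\in[a,b)\}$, and its closure in $([a,b]\times Y,d_\theta)$ equals $G(f_*)$ since $f_*$ is continuous at $b$ and $f_*(b)=M$.

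The main obstacle I expect is the last step: one has to carry the change of variables $u=l_n(x)$ through carefully using that $l_n$ is a homeomorphism $[a,b]\to[x_{n-1},x_n]$, combine the pieces using the definition of $T$ case-by-case on $[x_{n-1},x_n]$, and then argue the closure picks up precisely the single missing point $(b,M)$ (not spurious limit points in $Y$) by continuity of $f_*$. Once this identification is in place, uniqueness of the attractor in Remark \ref{Remark2} concludes the proof.
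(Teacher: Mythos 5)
Your proposal is correct and follows essentially the same route as the paper: obtain $f_*$ as the unique fixed point of the Matkowski contraction $T$, check the interpolation property at the nodes, and identify $G(f_*)$ with the attractor by showing $F_{\mathcal{S}}(G(f_*))=G(f_*)$ and invoking uniqueness from Remark \ref{Remark2}. Your exact computation $f_n(G(f_*))=\{(u,f_*(u)):u\in[x_{n-1},x_n]\}$ is just a compact reformulation of the paper's two inclusions, and your closure argument at $b$ matches the paper's treatment of the point $(b,M)$.
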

\begin{proof}
	Since $W_n$ are Rakotch contractions with respect to the second argument, from Theorem \ref{T-contraction} and Remark \ref{remark1}, we get that $T$ is a Matkowski contraction. Thus, it has a unique fixed point $f_* \in \mathcal{C}([a,b])$. Hence,
	$$Tf_*(x) = f_*(x)$$ 
	for all $x \in [a,b]$. 
	
	For $n \geq 1$ and $x\in [x_{n-1}, x_n]$, we get
	\begin{equation*}
		Tf_*(x) = W_n(l_n^{-1}(x), f_*(l_n^{-1}(x))) = f_*(x),
	\end{equation*}
	and $$Tf(b) = M = f_*(b).$$
	
	As
	\begin{equation*}
	\begin{aligned}
		f_*(x_n) &= Tf_*(x_n) \\
			& = W_n(l_n^{-1}(x_n), f_*(l_n^{-1}(x_n))) \\
			& = W_n(b,f_*(b))\\
			& = W_n(b,M) \\
			& = y_n %\qquad \qquad \forall\,
	\end{aligned}
	\end{equation*}
	for every $n \geq 1$, we conclude that $f_*$ is an interpolation function corresponding to $\Delta$.
	
	Let $G$ be the graph of $f_*$. 
	
	\begin{claim}
	\begin{equation*}
		\displaystyle\overline{\underset{n \geq 1}{\bigcup}f_n(G)} \subseteq G.
	\end{equation*}
	\end{claim}

	\textit{Justification of Claim 1}
	
	We have 
	\begin{equation}
	\begin{aligned}
	f_*(l_n(x)) & = Tf_*(l_n(x)) \\
	& = W_n(l_n^{-1}(l_n(x)), f_*(l_n^{-1}(l_n(x))))\\
	& = W_n(x, f_*(x)),
	\end{aligned}
	\label{f*}
	\end{equation}
	for every $x \in [a,b]$.
	
	Thus, we get
	\begin{equation}
	\begin{aligned}
		f_n(x, f_*(x)) & = (l_n(x), W_n(x, f_*(x))) \\
		& \overset{(\ref{f*})}{=} (l_n(x), f_*(l_n(x))) \in G 
	\end{aligned}
	\label{f**}
	\end{equation}
	for every $n \geq 1$ and every $x\in [a,b]$, so $$\underset{n \geq 1}{\bigcup}f_n(G) \subseteq G.$$
	
	Since $G$ is closed, we have 
	\begin{equation*}
	\displaystyle\overline{\underset{n \geq 1}{\bigcup}f_n(G)} \subseteq G.
	\label{subset1}
	\end{equation*}
	
	\begin{claim}
		\begin{equation*}
			G \subseteq	\displaystyle\overline{\underset{n \geq 1}{\bigcup}f_n(G)}
		\end{equation*}
	\end{claim}

	\textit{Justification of Claim 2}
	
	If $x \in[a,b)$ then there exists $n \geq 1$ such that $x \in [x_{n-1}, x_n]$, so we have
	\begin{equation}
	\begin{aligned}
		(x, f_*(x)) & = (x, f_*(l_n(l_n^{-1}(x))))\\
		& \overset{(\ref{f*})}{=} (l_n(l_n^{-1}(x)), W_n(l_n^{-1}(x), f_*(l_n^{-1}(x)))) \\
		& \overset{(\ref{f**})}{=} f_n(l_n^{-1}(x), f_*(l_n^{-1}(x))) \in f_n(G) \subseteq \displaystyle\overline{\underset{n \geq 1}{\bigcup}f_n(G)}
		\label{G2}
	\end{aligned}
	\end{equation}
	
	Moreover, we have $(b, f_*(b)) = \lim\limits_{n \to \infty} (x_n, f_*(x_n))$ and since $(x_n, f_*(x_n)) \overset{(\ref{G2})}{\in} f_n(G)$, for every $n \geq 1$ we infer that $(b, f_*(b)) \in \displaystyle\overline{\underset{n \geq 1}{\bigcup}f_n(G)}$, so
	\begin{equation*}
		 G \subseteq \displaystyle\overline{\underset{n \geq 1}{\bigcup}f_n(G)}.
		 \label{subset2}
	\end{equation*}
	
	Thus, from the above two claims, we get the equality 
	\begin{equation*}
	\displaystyle\overline{\underset{n \geq 1}{\bigcup}f_n(G)} = G,
	\end{equation*}
	i.e.
	\begin{equation*}
	F_{\mathcal{S}}(G) = G.
	\end{equation*}
	
	As $d_1$ and $d_{\theta}$ are equivalent, where $d_1((x,y), (x',y')) = |x- x'| + d((y,y'))$ for all $(x,y), (x',y') \in [a,b] \times Y$, we have  $G\in \mathcal{P}_{cp}([a,b]\times Y)$ and via Remark \ref{Remark2} we obtain $G= A_{\mathcal{S}}$, i.e.
	\begin{equation*}
		A_{\mathcal{S}} = \{(x,f_*(x))\lvert x \in [a,b]\}.
		\label{uniqueAtractor}
	\end{equation*}
\end{proof}

\begin{theorem}
	Under the framework of Theorem \ref{T3}, we have 
	\begin{equation*}
		\lim\limits_{n \to \infty}T^{[n]}(f_0) = f_*
	\end{equation*}
	for every $f_0 \in \mathcal{C}([a,b])$.
\end{theorem}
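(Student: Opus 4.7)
The plan is to observe that the statement is essentially a direct corollary of Matkowski's fixed point theorem (Theorem \ref{MatkowskiTh}) applied to the operator $T$. The substantive work has already been done in Theorem \ref{T-contraction} and in the existence part of the proof of Theorem \ref{T3}, so the final statement should amount to assembling these pieces.

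First, I would record the hypotheses of Theorem \ref{T3}: each $W_n$ is Lipschitz in the first variable and a Rakotch contraction with common modulus $\varphi$ in the second variable. By Remark \ref{remark1}(ii), each Rakotch contraction is a Matkowski contraction, so inequality (\ref{MatkWn}) holds with this $\varphi$ (which is non-decreasing and satisfies $\varphi^{[n]}(t)\to 0$ for $t>0$). Thus the hypothesis of Theorem \ref{T-contraction} is met, and that theorem gives that the operator $T : \mathcal{C}([a,b]) \to \mathcal{C}([a,b])$ is a Matkowski contraction with respect to the uniform metric $d_{\mathcal{C}([a,b])}$.

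Next, since $(\mathcal{C}([a,b]), d_{\mathcal{C}([a,b])})$ is a complete metric space (as noted in the preliminaries), Theorem \ref{MatkowskiTh} applies to $T$ and delivers two conclusions simultaneously: $T$ has a unique fixed point in $\mathcal{C}([a,b])$, and for every $f_0 \in \mathcal{C}([a,b])$ the sequence of iterates $T^{[n]}(f_0)$ converges to this fixed point. Because Theorem \ref{T3} has already exhibited $f_*$ as a fixed point of $T$, uniqueness forces the fixed point produced by Theorem \ref{MatkowskiTh} to coincide with $f_*$, yielding $\lim_{n\to\infty} T^{[n]}(f_0) = f_*$ for every $f_0 \in \mathcal{C}([a,b])$.

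There is no real obstacle here: the technical content is entirely encapsulated in Theorems \ref{T-contraction} and \ref{MatkowskiTh}. The only point requiring attention is to explicitly pass through Remark \ref{remark1}(ii) so that the Rakotch hypothesis of Theorem \ref{T3} can be fed into Theorem \ref{T-contraction}, which is stated for Matkowski contractions. Once that bridge is made, the convergence statement is immediate from the Picard property guaranteed by Matkowski's theorem.
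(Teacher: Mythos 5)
Your proof is correct, but it follows a genuinely different route from the paper. You apply Matkowski's fixed point theorem (Theorem \ref{MatkowskiTh}) directly to the operator $T$: since Theorem \ref{T-contraction} (fed by the Rakotch hypothesis via Remark \ref{remark1}) makes $T$ a Matkowski contraction on the complete space $(\mathcal{C}([a,b]), d_{\mathcal{C}([a,b])})$, the Picard property gives $T^{[n]}(f_0)\to f_*$ at once, and this part of the argument is in fact already implicit in the proof of Theorem \ref{T3}, where the paper invokes the unique fixed point of $T$. The paper instead proves the semi-conjugacy $G_{T(f_0)} = F_{\mathcal{S}}(G_{f_0})$, deduces by induction that $G_{T^{[n]}(f_0)} = F_{\mathcal{S}}^{[n]}(G_{f_0})$, and then uses the Picard property of the fractal operator $F_{\mathcal{S}}$ (Remark \ref{Remark2}) to conclude $G_{T^{[n]}(f_0)} \to G_{f_*}$ in the Hausdorff metric. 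Your argument is shorter and yields uniform convergence in $\mathcal{C}([a,b])$ directly, whereas the paper's conclusion is, strictly speaking, Hausdorff convergence of the graphs to the attractor (passing from that to uniform convergence requires an extra, standard step that the paper leaves implicit); on the other hand, the paper's route establishes the identity between the iterates of $T$ and the iterates of $F_{\mathcal{S}}$ on graphs, which is of independent interest because it exhibits the Picard iteration of the CIFS, started at any graph, as a sequence of graphs converging to the attractor. The only point to state carefully in your version is that the Rakotch comparison function $\varphi$ itself satisfies $\varphi^{[n]}(t)\to 0$ for $t>0$ (so that (\ref{MatkWn}) holds with a Matkowski-type $\varphi$); this is standard and is glossed in the same way by the paper, so it is not a gap relative to the paper's level of detail.
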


\begin{proof}
	
	\begin{claim}
		$$ G_{T(f_0)} = F_{\mathcal{S}}(G_{f_0}),$$
		for every $f_0 \in \mathcal{C}([a,b])$.
	\end{claim}
	\textit{Justification of Claim}
	
	For the very beginning, let us note that 
	\begin{equation}
		\displaystyle\underset{n \geq 1}{\bigcup}f_n(G_{f_0}) \subseteq G_{T(f_0)},
		\label{Claim1}
	\end{equation}
	for every $f_0 \in \mathcal{C}([a,b])$.
	
	Indeed, if $y \in \underset{n \geq 1}{\bigcup}f_n(G_{f_0})$, then there exists $n_y \geq 1$, such that $y \in f_{n_y}(G_{f_0})$, so there exists $x \in [a,b]$ having the property that $y  =f_{n_y}(x, f_0(x))$.
	
	Hence 
	\begin{equation*}
		\begin{aligned}
			y & = (l_{n_y}(x), W_{n_y}(x, f_0(x)))\\
			& = (l_{n_y}(x), W_{n_y}(l_{n_y}^{-1}(l_{n_y}(x)), f_0(l_{n_y}^{-1}(l_{n_y}(x)))))\\
			& \overset{l_{n_y}(x) \in [x_{n_y-1},x_{n_y}]}{=} (l_{n_y}(x), T(f_0)(l_{n_y}(x))) \in G_{T(f_0)},
		\end{aligned}
	\end{equation*}
	and the proof of (\ref{Claim1}) is completed.
	
	We have
	\begin{equation}
		F_{\mathcal{S}}(G_{f_0}) \subseteq G_{T(f_0)}
		\label{Claim1-1}
	\end{equation}
	for every $f_0 \in \mathcal{C}([a,b])$.
	
	Indeed, if $ y \in 	F_{\mathcal{S}}(G_{f_0}) = \overline{\underset{n \geq 1}{\bigcup}f_n(G_{f_0})}$, then there exists $(z_k)_{k \geq 1} \subseteq \underset{n \geq 1}{\bigcup}f_n(G_{f_0})$ such that $ y =\lim\limits_{k \to \infty} z_{k}$.
	
	Hence, according to (\ref{Claim1}), there exists $u_k \in [a,b]$ with the property that $y = \lim\limits_{k \to \infty} (u_k, T(f_0)(u_k)) \in \overline{G_{T(f_0)}} \overset{T(f_0) \, \text{continuous}}{=} G_{T(f_0)}$.
	
	We have 
	\begin{equation}
	G_{T_{f_0}} \subseteq F_{\mathcal{S}}(G_{f_0})
	\label{Claim1-2}
	\end{equation}
	for every $f_0 \in \mathcal{C}([a,b])$.
	
	Indeed, let us consider $y \in G_{T(f_0)}$. Then, there exists $x \in [a,b]$, such that $y = (x, T(f_0)(x))$. First, let us note that if $x \in [a,b)$, there exists $n_x \geq 1$ having the property that $x\in[x_{n_x-1}, x_{n_x}]$, so one can find 
	$u_x \in [a,b]$ such that $ x = l_{n_x}(u_x)$.
	
	Thus, 
	\begin{equation*}
	\begin{aligned}
		y & = (l_{n_x}(u_x), T(f_0)(l_{n_x}(u_x)))\\
		 & \overset{l_{n_x}(x) \in [x_{n_x-1},x_{n_x}]}{=} (l_{n_x}(u_x), W_{n_x}(l_{n_x}^{-1}(l_{n_x}(u_x)),f_0(l_{n_x}^{-1}(l_{n_x}(u_x)))))\\
		 & = (l_{n_x}(u_x), W_{n_x}(u_x,f_0(u_x)))\\
		 & = f_{n_x}(u_x,f_0(u_x)) \in f_{n_x}(G_{f_0}) \subseteq 
		 \underset{n \geq 1}{\bigcup}f_n(G_{f_0}) \subseteq \overline{\underset{n \geq 1}{\bigcup}f_n(G_{f_0})} = F_{\mathcal{S}}(G_{f_0}).
	\end{aligned}
	\end{equation*}
	
	Consequently, 
	\begin{equation}
		(x,T(f_0)(x)) \in F_{\mathcal{S}}(G_{f_0})
		\label{Claim1-3}
	\end{equation}
	for every $x \in [a,b)$ and every $f_0\in \mathcal{C}([a,b])$.
	
	In addition,
	\begin{equation}
		(b,T(f_0)(b)) = \lim\limits_{x \nearrow b} (x,T(f_0)(x)) \in \overline{F_{\mathcal{S}}(G_{f_0})} \overset{G_{f_0} \text{ compact} }{=} F_{\mathcal{S}}(G_{f_0})
		\label{Claim1-4}
	\end{equation}
	for every $f_0 \in \mathcal{C}([a,b])$.
	
	Relations (\ref{Claim1-3}) and (\ref{Claim1-4}) ensure (\ref{Claim1-2}).
	
	Taking into account (\ref{Claim1-1}) and (\ref{Claim1-2}), the justification of the Claim is completed.
	
	Finally, the Claim implies - via the mathematical induction method - that 
	\begin{equation}
		F_{\mathcal{S}}^{[n]}(G_{f_0}) = G_{T^{[n]}(f_0)}
		\label{Claim1-5}
	\end{equation}
	for every $n \geq 1$ and every $f_0 \in \mathcal{C}([a,b])$.
	
	As $G_{f_0} \in \mathcal{P}_{cp}([a,b]\times Y)$, we have 
	\begin{equation*}
	\lim\limits_{n \to \infty} F_{\mathcal{S}}^{[n]}(G_{f_0}) \overset{\text{ Remark \ref{Remark2}}}{=} A_{\mathcal{S}}
	\end{equation*}
	so, via (\ref{Claim1-5}) and Theorem \ref{T3}, we get 
	\begin{equation*}
		\lim\limits_{n \to\infty} G_{T^{[n]}(f_0)} = G_{f_*}.
	\end{equation*}
		
\end{proof}

\section{Particular cases}

We can choose
\begin{equation}
l_n(x) = \dfrac{x_n-x_{n-1}}{b-a}x + \dfrac{bx_{n-1}-ax_n}{b-a}
\label{l_n}
\end{equation}
for every $x \in [a,b]$.

It is immediate that 
\begin{equation*}
	\begin{aligned}
		|l_n(x) - l_n(x')| &\leq \left|\dfrac{x_n-x_{n-1}}{b-a}x + \dfrac{bx_{n-1}-ax_n}{b-a} - \dfrac{x_n-x_{n-1}}{b-a}x' - \dfrac{bx_{n-1}-ax_n}{b-a}\right| \\
		&\leq \dfrac{x_n-x_{n-1}}{b-a}\left|x-x'\right|
	\end{aligned}
\end{equation*}
where $\dfrac{x_n-x_{n-1}}{b-a} \in [0,1)$ and $\underset{n \geq 1}{\sup}\dfrac{x_n-x_{n-1}}{b-a} < 1$. Also, we have 
\begin{equation*}
\begin{aligned}
	&l_n(a) &=& \dfrac{x_n-x_{n-1}}{b-a}a + \dfrac{bx_{n-1}-ax_n}{b-a} \\
	& &=&x_{n-1}\\
	&l_n(b) &=& \dfrac{x_n-x_{n-1}}{b-a}b + \dfrac{bx_{n-1}-ax_n}{b-a} \\
	& &=&x_{n}
\end{aligned}
\end{equation*}
for every $n \geq 1$.

Note that if $Y$ is a compact real interval, we can choose $W_n$ in the following two ways:

\textbf{A.}
\begin{equation}
W_n(x,y) = c_nx+d_ny+g_n, 
\label{exW_n}
\end{equation}
where $$c_n = \dfrac{y_n-y_{n-1}}{b-a} - d_n\dfrac{M-m}{b-a},$$  $$g_n = \dfrac{by_{n-1}-ay_n}{b-a} - d_n\dfrac{bm-aM}{b-a}$$ and $d_n \in [0,1)$ such that $\lim\limits_{n \to \infty} d_n = 0$.

Indeed, on the one hand we have
\begin{equation*}
\begin{aligned}
W_n(a,m) &= \left[\dfrac{y_n-y_{n-1}}{b-a} - d_n\dfrac{M-m}{b-a}\right]a + d_n m + \dfrac{by_{n-1}-ay_n}{b-a} - d_n\dfrac{bm-aM}{b-a} \\
& = \dfrac{ay_n-ay_{n-1}+by_{n-1}-ay_n}{b-a} - d_n\dfrac{aM-am+bm-aM-m(b-a)}{b-a}\\
& = y_{n-1}
\end{aligned}
\end{equation*}
and 
\begin{equation*}
\begin{aligned}
W_n(b,M) &= \left[\dfrac{y_n-y_{n-1}}{b-a} - d_n\dfrac{M-m}{b-a}\right]b + d_n M + \dfrac{by_{n-1}-ay_n}{b-a} - d_n\dfrac{bm-aM}{b-a} \\
& = \dfrac{by_n-by_{n-1}+by_{n-1}-ay_n}{b-a} - d_n\dfrac{bM-bm+bm-aM-M(b-a)}{b-a}\\
& = y_{n}
\end{aligned}
\end{equation*}
for every $n \geq 1$. 

On the other hand, we have 

\begin{equation*}
\begin{aligned}
0 \leq diam(Im\, W_n) & = \underset{(x,y),(x',y')\in[a,b]\times Y}{\sup}|W_{n}(x, y) - W_{n}(x',y')| \\
& = \underset{(x,y),(x',y')\in[a,b]\times Y}{\sup} |(c_nx + d_ny + g_n) - (c_nx' + d_ny' + g_n)| \\
& \leq \underset{(x,y),(x',y')\in[a,b]\times Y}{\sup} |c_n||x-x'| + |d_n||y-y'|\\
&\leq (b-a)c_n + diam(Y)d_n
\end{aligned}
\end{equation*}
for all $n\geq 1$.

As $\lim\limits_{n \to \infty} c_n = \lim\limits_{n\to \infty} d_n = 0$, we get $\lim\limits_{n \to \infty} diam(Im\, W_n) = 0$.

Note that \begin{equation*}
\begin{aligned}
|W_n(x,y)-W_n(x',y')| & = |c_n x + d_n y + g_n - (c_n x' + d_n y' + g_n)|\\ &\leq \left| c_n\right| |x-x'| +d_n\left|y-y'\right|\\&\leq L|x-x'|+c|y-y'|
\end{aligned}
\end{equation*}
for all $(x,y), (x',y') \in [a,b]\times Y$, where $L = \underset{n \geq 1}{\sup}|c_n| \in \r$ and $c= \underset{n\geq 1}{\sup} |d_n| \in [0,1)$.

Hence, $W_n$ are Banach contractions on the second variable, so they are Rakotch contractions on the second  variable for the comparison function $\varphi$ given by $\varphi(t) = c\cdot t$ for every $t \geq 0$.

\textbf{B.}

\begin{equation}
W_n(x,y) = c_nx+\dfrac{y}{1+ny}+g_n, 
\label{exW_nB}
\end{equation}
where $$c_n = \dfrac{y_n-y_{n-1}}{b-a} - \dfrac{1}{b-a}\left(\dfrac{M}{1+nM} - \dfrac{m}{1+nm}\right)$$ 
and 
$$g_n = y_{n-1} - a\dfrac{y_n - y_{n-1}}{b-a} + \dfrac{a}{b-a} \dfrac{M}{1+nM} - \dfrac{b}{b-a} \dfrac{m}{1+nm}.$$% and $d_n \geq 0$. %such that $\lim\limits_{n \to \infty} d_n = 0$.

Indeed, on the one hand we have
\begin{equation*}
\begin{aligned}
W_n(a,m) &= \left[\dfrac{y_n-y_{n-1}}{b-a} - \dfrac{1}{b-a}\left(\dfrac{M}{1+nM} - \dfrac{m}{1+nm}\right)\right]a \\&+ \dfrac{m}{1+ nm} + y_{n-1} - a\dfrac{y_n - y_{n-1}}{b-a} + \dfrac{a}{b-a} \dfrac{M}{1+nM} - \dfrac{b}{b-a} \dfrac{m}{1+nm} \\
& = y_{n-1}
\end{aligned}
\end{equation*}
and 
\begin{equation*}
\begin{aligned}
W_n(b,M) &= \left[\dfrac{y_n-y_{n-1}}{b-a} - \dfrac{1}{b-a}\left(\dfrac{M}{1+nM} - \dfrac{m}{1+nm}\right)\right]b \\&+ \dfrac{M}{1+ nM} + y_{n-1} - a\dfrac{y_n - y_{n-1}}{b-a} + \dfrac{a}{b-a} \dfrac{M}{1+nM} - \dfrac{b}{b-a} \dfrac{m}{1+nm}\\
& = y_{n},
\end{aligned}
\end{equation*}
for every $n \geq 1$. 

On the other hand, we have 

\begin{equation*}
\begin{aligned}
0 \leq diam(Im\, W_n) & = \underset{(x,y),(x',y')\in[a,b]\times Y}{\sup}|W_{n}(x, y) - W_{n}(x',y')| \\
& = \underset{(x,y),(x',y')\in[a,b]\times Y}{\sup} \left|\left(c_nx + \frac{y}{1+ny} + g_n\right) - \left(c_nx' + \dfrac{y'}{1+ny'} + g_n\right)\right| \\
& \leq \underset{(x,y),(x',y')\in[a,b]\times Y}{\sup} 
\left(|c_n||x-x'| + |y-y'| \dfrac{1}{(1+ny)(1+ny')}\right)\\
&\leq (b-a)c_n + diam(Y)\dfrac{1}{(1+n\inf Y)^2}
\end{aligned}
\end{equation*}
for every $n\geq 1$.

As $\lim\limits_{n \to \infty} c_n = \lim\limits_{n\to \infty} \dfrac{1}{(1+ n \inf y)^2} = 0$, we get $\lim\limits_{n \to \infty} diam(Im\, W_n) = 0$.

Note that if $Y \subseteq [0,\infty)$, we have 
\begin{equation*}
\begin{aligned}
|W_n(x,y)-W_n(x',y')| & = \left|\left(c_nx + \frac{y}{1+ny} + g_n\right) - \left(c_nx' + \dfrac{y'}{1+ny'} + g_n\right)\right|\\ &\leq \left| c_n\right|\cdot |x-x'| + \dfrac{\left|y-y'\right|}{(1+ny)(1+ny')}\\
&\leq L |x-x'| + \dfrac{\left|y-y'\right|}{1+|y-y'|}\\
&= L |x-x'| + \varphi(|y-y'|)
\end{aligned}
\end{equation*}
for all $(x,y), (x',y') \in [a,b]\times Y$, where $L = \underset{n \geq 1}{\sup}|c_n| \in \r$ and the comparison function $\varphi$ is given by $\varphi(t) = \dfrac{t}{1+t}$. 

Hence, $W_n$ are Rakotch contractions on the second variable, but they are not Banach contractions on the second variable. 

In the particular case where $Y$ is a compact real interval we can choose $f_n$ in the following two ways:

\textbf{A.} 
\begin{equation*}
\begin{aligned}
f_n(x,y)= &\left( \dfrac{x_n-x_{n-1}}{b-a}x + \dfrac{bx_{n-1}-ax_n}{b-a}, \right. \\ &\;\;\; \left. \left(\dfrac{y_n-y_{n-1}}{b-a} - d_n\dfrac{M-m}{b-a}\right) x + d_n y + \dfrac{by_{n-1}-ay_n}{b-a} - d_n\dfrac{bm-aM}{b-a} \right).
\end{aligned}
\end{equation*}

\textbf{B.}
\begin{equation*}
\begin{aligned}
f_n(x,y)= &\left( \dfrac{x_n-x_{n-1}}{b-a}x + \dfrac{bx_{n-1}-ax_n}{b-a}, \right. \\ &\;\;\; \left. \left(\dfrac{y_n-y_{n-1}}{b-a} - \dfrac{1}{b-a}\left(\dfrac{M}{1+nM} - \dfrac{m}{1+nm}\right)\right)x + \dfrac{y}{1+ny} \right. \\& \left.+ y_{n-1} - a\dfrac{y_n - y_{n-1}}{b-a} + \dfrac{a}{b-a} \dfrac{M}{1+nM} - \dfrac{b}{b-a} \dfrac{m}{1+nm} \right).
\end{aligned}
\end{equation*}

The above considerations show that our result is a genuine generalization of Secelean's and Ri's results.

\end{document}